\title{\LARGE \bf Regularization of non-overshooting quasi-continuous sliding mode control for chattering suppression at equilibrium}
\author{Michael Ruderman and Denis Efimov  % <-this % stops a space
\thanks{M. Ruderman is with Department of Engineering sciences, University of Agder (UiA), Norway. He is on sabbatical at Polytechnic University of Bari. \newline
Email: {\tt\small michael.ruderman@uia.no}}%%
\thanks{D. Efimov is with INRIA, Univ. Lille, CNRS, CRIStAL, Lille, France.}%%
\thanks{The work is partially supported by Aurora program
(RCN grant 340782).}
}
\begin{document}

\newtheorem{theorem}{Theorem}
\newtheorem{prop}{Proposition}
\newtheorem{rem}{Remark}
\newtheorem{defin}{Definition}

\maketitle \thispagestyle{empty} \pagestyle{empty}

\bstctlcite{references:BSTcontrol}

%%%%%%%%%%%%%%%%%%%%%%%%%%%%%%%%%%%%%%%%%%%%%%%%%%%%%%%%%%%%%%%%%%%%%%%%%%%%%%%%
\begin{abstract}

Robust finite-time feedback controller introduced for the second-order systems in \cite{ruderman2024robust} can be seen as a non-overshooting quasi-continuous sliding mode control. The paper proposes a regularization scheme to suppress inherent chattering due to discontinuity of the control \cite{ruderman2024robust} in the origin, in favor of practical applications. A detailed analysis with ISS and iISS proofs are provided along with supporting numerical results.   
\end{abstract}

\section{Introduction}
\label{sec:1}

Robust feedback control systems usually require to meet some criteria in relation to the system and controller transfer characteristics and disturbances upper bound, that is assumed to be known, cf. e.g. \cite{kwakernaak1993robust}. High-gains appear to be a natural way to suppress unknown disturbances with less information required by the control design, see e.g. \cite{tsypkin1999} for minimum phase systems. For static disturbances, an integral control (often resulting in a standard PID or PID-like feedback regulation \cite{aastrom2006}) can be sufficient, see also \cite{khalil2000universal} for nonlinear systems. At the same time, an integral feedback action increases inherently the system order and can also lead to the wind-up effects and even destabilization, cf. \cite{hippe2006}. 

Alternatively to high-gain feedback regulation (e.g. \cite{ilchmann2008} and references therein)
the sliding mode control methods, see \cite{shtessel2014,utkin2020}, become widespread for
compensating the unknown matched perturbations. This is not surprising since a control proportional to the sign of the regulation error is particularly efficient, as already recognized in \cite{fuller1960,haimo1986} for a time-optimal stabilization and in \cite{Tsypkin1984relay} for disturbance compensation. At the same time, a discontinuous control signal in the sliding mode methods can be undesirable for applications, especially with regard to the wear effects, overloading of the actuators, induced parasitic noise, and energy consumption. To this end, in \cite{levant2005} the class of quasi-continuous high order sliding mode control algorithms has been introduced, which have the discontinuity at the origin only. However, this control approach cannot guarantee a monotonic convergence without overshoots in presence of non-vanishing bounded disturbances, and the chattering effect in the origin persists. It appears also to be a common sense that some chattering, which is due to non-modeled dynamics (of actuators and/or sensors in the loop) is unavoidable along with high feedback gain or discontinuous control actions, see e.g. \cite{utkin2015}. The recently proposed novel nonlinear state feedback control for the perturbed second-order systems \cite{ruderman2024robust}, which builds up on the idea of nonlinear damping inverse to the output distance to the origin proposed in \cite{ruderman2021a}, can be seen as a non-overshooting quasi-continuous sliding mode control. 

This work is a continuation of the developments in \cite{ruderman2024robust}, with the aim to overcome discontinuity related issues in the origin, in favor of practical applications. For this purpose, two regularization schemes are proposed for the control algorithm from \cite{ruderman2024robust}, which admit a simple discretization.
The rest of the paper is organized as follows. After introducing the problem statement and used notations, we provide the necessary preliminaries in section \ref{sec:prelim}. The main results of regularization of the control \cite{ruderman2024robust} and the related analysis are given in section \ref{sec:33}. An alternative regularization scheme is also shown. Additional comparative numerical results of the control \cite{ruderman2024robust} and both regularization schemes are visualized in section \ref{sec:3}. The work is concluded by section \ref{sec:4}.

\subsection*{Problem statement}

The closed-loop control system \cite{ruderman2024robust} takes the form:
\begin{gather}
\dot{x}_1(t)=x_{2}(t),\label{eq:CLoriginal}\\
\dot{x}_2(t)=\qquad\;\;\nonumber \\
\begin{cases}
-\bigl|x_{1}(t)\bigr|^{-1}\Bigl(\gamma x_{1}(t)+\bigl|x_{2}(t)\bigr|x_{2}(t)\Bigr)+d(t) & x_{1}(t)\ne0\\
-\gamma\bigl|x_{1}(t)\bigr|^{-1}x_{1}(t)+d(t) & x_{1}(t)=0
\end{cases},\nonumber
\end{gather}
where $\gamma>0$ is the design parameter. If the upper bounded and matched unknown disturbance 
$\|d\|_{\infty}\leq D$ (with the given $D>0$) is non-vanishing, then the sliding mode appears
in the origin $(x_1,x_2)=0$, which is uniformly globally finite-time stable \cite{ruderman2024robust}, due to the sign switching control part of \eqref{eq:CLoriginal}. This can, like any continuously switching control action, lead to unnecessary overloading of the actuator and plant structure, higher energy consumption, as well as wear and noise effects of the controlled process at large.     

Our current goal is to provide a suitable regularization of \eqref{eq:CLoriginal}, while preserving all essential convergence and stability properties of the control system in presence of the exogenous disturbance input and, at the same time, suppressing chattering due to the sliding mode in the stable origin.       

\subsection*{Notation}
\begin{itemize}
\item $\mathbb{R}_{+}=\{x\in\mathbb{R}:x\geq0\}$, where $\mathbb{R}$ is
the set of real numbers.
\item $|\cdot|$ denotes the absolute value in $\mathbb{R}$, $\Vert\cdot\Vert$
is used for the Euclidean norm on $\mathbb{R}^{n}$.
\item For a (Lebesgue) measurable function $d:\mathbb{R}_{+}\to\mathbb{R}^{m}$
and $[t_{0},t_{1})\subset\mathbb{R}_{+}$, we define the norm $\|d\|_{[t_{0},t_{1})}=\text{ess\ sup}_{t\in[t_{0},t_{1})}\Vert d(t)\Vert$.
Then, $\|d\|_{\infty}=\|d\|_{[0,+\infty)}$ and the set of such functions
$d$ satisfying the property $\|d\|_{\infty}<+\infty$ is further
denoted as $\mathcal{L}_{\infty}^{m}$ (the set of essentially bounded
measurable functions).
\item A continuous function $\alpha:\mathbb{R}_{+}\rightarrow\mathbb{R}_{+}$
belongs to the class $\mathcal{K}$ if $\alpha(0)=0$ and it is strictly
increasing. A function $\alpha:\mathbb{R}_{+}\rightarrow\mathbb{R}_{+}$
belongs to the class $\mathcal{K}_{\infty}$ if $\alpha\in\mathcal{K}$
and it is increasing to infinity. A continuous function $\beta:\mathbb{R}_{+}\times\mathbb{R}_{+}\to\mathbb{R}_{+}$
belongs to the class $\mathcal{KL}$ if $\beta(\cdot,t)\in\mathcal{K}$
for each fixed $t\in\mathbb{R}_{+}$ and $\beta(s,\cdot)$ is decreasing
to zero for each fixed $s>0$.
\item The Young's inequality claims that for any $\mathfrak{a},\mathfrak{b}\in\mathbb{R}_{+}$:
\[
\mathfrak{a}\mathfrak{b}\leq\frac{1}{p}\mathfrak{a}^{p}+\frac{p-1}{p}\mathfrak{b}^{\frac{p}{p-1}} \quad \hbox{ for any } p>1.
\]
\end{itemize}

\section{Preliminaries}
\label{sec:prelim}

Consider a nonlinear system:
\begin{equation}
\dot{x}(t)=f\bigl(x(t),d(t)\bigr),\;t\geq0,\label{eq:nonl_syst}
\end{equation}
where $x(t)\in\mathbb{R}^{n}$ is the state, $d(t)\in\mathbb{R}^{m}$
is the external input, $d\in\mathcal{L}_{\infty}^{m}$, and $f:\mathbb{R}^{n+m}\to\mathbb{R}^{n}$
is a locally Lipschitz continuous function, $f(0,0)=0$. For an initial
condition $x_{0}\in\mathbb{R}^{n}$ and input $d\in\mathcal{L}_{\infty}^{m}$,
define the corresponding solution $x(t,x_{0},d)$ $\forall \: t\geq0$
for which the solution exists.

In this work we will be interested in the following stability properties
\cite{sontag2008}:
\begin{defin}
The system (\ref{eq:nonl_syst}) is called \emph{input-to-state stable}
(ISS), if there are functions $\beta\in\mathcal{K}\mathcal{L}$ and
$\gamma\in\mathcal{K}$ such that 
\[
\Vert x(t,x_{0},d)\Vert\le\beta(\Vert x_{0}\Vert,t)+\gamma(\|d\|_{[0,t)})\quad\forall t\ge0
\]
for any $x_{0}\in\mathbb{R}^{n}$ and $d\in\mathcal{L}_{\infty}^{m}$.
The function $\gamma$ is called the \emph{nonlinear asymptotic gain}.
\end{defin}

\begin{defin}
The system (\ref{eq:nonl_syst}) is called \emph{integral ISS} (iISS),
if there are functions $\alpha\in\mathcal{K}_{\infty}$, $\gamma\in\mathcal{K}$
and $\beta\in\mathcal{K}\mathcal{L}$ such that 
\[
\alpha(\Vert x(t,x_{0},d)\Vert)\le\beta(\Vert x_{0}\Vert,t)+\int\limits _{0}^{t}\gamma(\Vert d(s)\Vert)\,ds\quad\forall t\ge0
\]
for any $x_{0}\in\mathbb{R}^{n}$ and $d\in\mathcal{L}_{\infty}^{m}$.
\end{defin}
Note that the above properties imply that (\ref{eq:nonl_syst}) is
globally asymptotically stable at the origin for zero input $d$; we
will refer to this property as $0$\emph{-GAS}.

These robust stability properties have the following characterizations
in terms of existence of a Lyapunov function: 
\begin{defin}
\label{def:LyapISS_iISS} A smooth $V:\mathbb{R}^{n}\to\mathbb{R}_{+}$
is called \emph{ISS-Lyapunov function} for the system (\ref{eq:nonl_syst})
if there are $\alpha_{1},\alpha_{2},\alpha_{3}\in\mathcal{K}_{\infty}$
and $\eta\in\mathcal{K}$ such that 
\begin{gather*}
\alpha_{1}(\Vert x\Vert)\le V(x)\le\alpha_{2}(\Vert x\Vert),\\
\frac{\partial V(x)}{\partial x} f(x,d)\le\eta(\Vert d\Vert)-\alpha_{3}(\Vert x\Vert)
\end{gather*}
for all $x\in\mathbb{R}^{n}$ and all $d\in\mathbb{R}^{m}$. Such
a $V$ is called \emph{iISS-Lyapunov function} if $\alpha_{3}:\mathbb{R}_{+}\to\mathbb{R}_{+}$
is just a positive definite function.
\end{defin}
\begin{defin}
The system (\ref{eq:nonl_syst}) is called \emph{zero-output smoothly
dissipative} if there is a smooth $V:\mathbb{R}^{n}\to\mathbb{R}_{+}$
with $\alpha_{1},\alpha_{2}\in\mathcal{K}_{\infty}$ and $\chi\in\mathcal{K}$
such that 
\begin{gather*}
\alpha_{1}(\Vert x\Vert)\le V(x)\le\alpha_{2}(\Vert x\Vert),\quad
\frac{\partial V(x)}{\partial x}f(x,d)\le\chi(\Vert d\Vert)
\end{gather*}
for all $x\in\mathbb{R}^{n}$ and all $d\in\mathbb{R}^{m}$.
\end{defin}
The relations between these Lyapunov characterizations and the robust
stability properties are given below:
\begin{theorem}
\label{thm:ISS_Lyap} The system (\ref{eq:nonl_syst}) is ISS if and
only if it admits an ISS-Lyapunov function.
\end{theorem}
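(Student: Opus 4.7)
The statement is the classical Sontag--Wang characterization of ISS. My plan is to prove the two implications separately: sufficiency (existence of an ISS-Lyapunov function implies ISS) via a standard comparison argument, and necessity (ISS implies the existence of a smooth ISS-Lyapunov function) via a converse Lyapunov construction.

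For sufficiency, starting from the dissipation $\frac{\partial V(x)}{\partial x} f(x,d) \leq \eta(\|d\|) - \alpha_3(\|x\|)$, I would define the threshold $\rho(r) := \alpha_3^{-1}(2\eta(r)) \in \mathcal{K}_\infty$. Whenever $\|x(t)\| \geq \rho(\|d\|_{[0,t)})$ one has $\dot V \leq -\tfrac{1}{2}\alpha_3(\|x\|) \leq -\tfrac{1}{2}(\alpha_3 \circ \alpha_2^{-1})(V(x))$, and the standard scalar comparison lemma applied to this inequality yields a class-$\mathcal{KL}$ decay bound on $V$. A short invariance argument then shows that trajectories cannot leave the sublevel set $\{V \leq \alpha_2(\rho(\|d\|_\infty))\}$. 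Combining with the sandwich $\alpha_1(\|x\|) \leq V(x) \leq \alpha_2(\|x\|)$ produces the ISS estimate with asymptotic gain $\gamma(r) = \alpha_1^{-1} \circ \alpha_2 \circ \rho(r)$ and a $\mathcal{KL}$ function obtained by conjugating the scalar bound through $\alpha_1$ and $\alpha_2$.

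For necessity I would follow the Sontag--Wang strategy. First, reinterpret ISS as a robust stability property: viewing $d$ as any admissible perturbation satisfying $\|d(t)\| \leq \sigma(\|x(t)\|)$ for a sufficiently small $\sigma \in \mathcal{K}$, the induced differential inclusion becomes globally asymptotically stable uniformly in the perturbation. Next, invoke a converse Lyapunov theorem of Kurzweil--Massera type for such robustly $0$-GAS inclusions to obtain a continuous Lyapunov function $\tilde V$ satisfying the required dissipation outside a shrinking neighborhood of the origin, for instance by setting $\tilde V(x) = \sup_{d,\,t\geq 0} \bigl\{ \|x(t,x,d)\| - \mathrm{gain}(\|d\|_{[0,t)}) \bigr\}$ or an energy-shaped variant of it. Finally, smooth $\tilde V$ by convolution with a mollifier whose support radius vanishes as $\|x\| \to 0$, and check that the resulting $V$ still satisfies the estimates of Definition \ref{def:LyapISS_iISS}.

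The main obstacle is precisely this last smoothing step: the natural candidate built directly from the $\mathcal{KL}$ decay estimate is in general only continuous (sometimes merely lower semicontinuous), and showing that a smooth version exists whose gradient still produces the pointwise dissipation $\eta(\|d\|) - \alpha_3(\|x\|)$ for \emph{all} $(x,d)$, rather than just along trajectories, requires the delicate mollification argument of Sontag and Wang. Since the present paper uses this theorem as a black box imported from \cite{sontag2008}, I would close by citing that reference for the full smoothing construction and noting that only the sufficiency direction is actually invoked in the subsequent Lyapunov-based analysis of the regularized controller.
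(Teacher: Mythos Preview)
The paper does not prove this theorem at all: it is stated in the Preliminaries section as a classical result imported from \cite{sontag2008}, with no accompanying argument. So there is nothing to compare your proposal against, and indeed you correctly anticipate this in your final paragraph.

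Your sketch of the Sontag--Wang argument is broadly accurate. The sufficiency direction is essentially as you describe (comparison lemma plus invariance of the sublevel set), and your outline of necessity via robust $0$-GAS and converse Lyapunov plus mollification captures the architecture of the original proof. One small correction: the candidate $\tilde V(x) = \sup_{d,t} \{\|x(t,x,d)\| - \text{gain}(\|d\|_{[0,t)})\}$ is not the construction actually used; Sontag and Wang instead work with a Lyapunov function for the associated differential inclusion $\dot x \in f(x, \overline{B}_{\sigma(\|x\|)})$ obtained from Lin--Sontag--Wang converse results, and the smoothing is carried out at that stage rather than on an ISS-shaped supremum. But since the paper treats the theorem as a black box and only the sufficiency direction is invoked downstream (to conclude local ISS from the Lyapunov function $W$ and later $U$), your closing remark is exactly right: cite the reference and move on.
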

The local ISS property can be defined by restricting the domains of
admissible values for $x$ and $d$, then all definitions and the
results save their meanings but locally.
\begin{theorem}
\label{thm:iISS_Lyap} The system (\ref{eq:nonl_syst}) is iISS if
and only if 

$i)$ it admits an iISS-Lyapunov function;

$ii)$ it is $0$-GAS and zero-output smoothly dissipative.
\end{theorem}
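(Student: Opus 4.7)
\emph{Proof plan.} Both assertions of the theorem are the classical characterizations due to Angeli, Sontag and Wang, and my plan is to close the cycle iISS $\Rightarrow (ii) \Rightarrow (i) \Rightarrow$ iISS. The easiest step is iISS $\Rightarrow (ii)$: setting $d\equiv 0$ in the iISS estimate yields 0-GAS immediately; for smooth zero-output dissipativity I would construct
\[
V(x) = \sup_{d \in \mathcal{L}_\infty^m,\, t\ge 0}\Bigl[\alpha\bigl(\Vert x(t,x,d)\Vert\bigr) - \int_0^t \gamma(\Vert d(s)\Vert)\,ds\Bigr]_+,
\]
and then apply a standard smoothing procedure, so that the iISS bound makes $V$ proper and upper-bounded by a $\mathcal{K}_\infty$ function of $\Vert x\Vert$, while differentiation along trajectories enforces $\dot V \le \gamma(\Vert d\Vert)$.

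For $(ii) \Rightarrow (i)$, which I expect to be the hardest step, I would apply the Sontag--Wang converse Lyapunov theorem for 0-GAS to obtain a smooth proper $W$ with $\frac{\partial W}{\partial x} f(x,0) \le -\rho(\Vert x\Vert)$ for some positive definite $\rho$, and then couple it with the dissipativity certificate $V$ of $(ii)$ via
\[
U(x) = g\bigl(W(x)\bigr) + V(x),
\]
where $g\in\mathcal{K}_\infty$ is smooth and chosen so that, using local Lipschitz continuity of $f$, the cross term $g'(W(x))\bigl[\frac{\partial W}{\partial x} f(x,d)-\frac{\partial W}{\partial x} f(x,0)\bigr]$ splits into a fraction of $\rho(\Vert x\Vert)$ absorbed by the 0-GAS decay plus a pure $\mathcal{K}$-function of $\Vert d\Vert$. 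The resulting bound $\frac{\partial U}{\partial x} f(x,d) \le -\alpha_3(\Vert x\Vert) + \eta(\Vert d\Vert)$, with $\alpha_3$ only positive definite, is precisely the iISS-Lyapunov inequality of Definition~\ref{def:LyapISS_iISS}.

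Finally, for $(i) \Rightarrow$ iISS, I would rewrite the dissipation inequality as $\dot V \le -\tilde\alpha_3(V) + \eta(\Vert d\Vert)$ with $\tilde\alpha_3 = \alpha_3\circ\alpha_2^{-1}$ positive definite, and integrate using a scalar comparison lemma tailored to positive definite (not necessarily $\mathcal{K}_\infty$) decay rates; composition with $\alpha_1^{-1}$ then delivers the integral asymptotic gain form. The main obstacle is the calibration of $g$ in the middle step: since the only information about $W$ is a 0-GAS decay and about $V$ is input-dissipativity with no decay at all, the weight $g$ must grow fast enough to dominate the unbounded input sensitivity of $W$ away from the origin while keeping $U$ proper, which is the core technicality of the Angeli--Sontag--Wang construction. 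Once $U$ is in hand, the comparison step is routine.
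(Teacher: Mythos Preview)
The paper does not prove this theorem: it is stated in the Preliminaries section as a known characterization from the literature (the reference \cite{sontag2008} in the paper, with the original result due to Angeli, Sontag and Wang), and is only \emph{applied} later to conclude iISS of the regularized system from $0$-GAS plus zero-output smooth dissipativity. So there is no ``paper's own proof'' to compare against.

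Your plan is a faithful outline of the classical Angeli--Sontag--Wang argument and the logical cycle iISS $\Rightarrow (ii) \Rightarrow (i) \Rightarrow$ iISS is the standard way to organize it. Two points are worth flagging if you intend to flesh this out. First, in the step iISS $\Rightarrow$ zero-output smooth dissipativity, the value function you wrote is a correct candidate but is a priori only lower semicontinuous; the ``standard smoothing procedure'' you invoke is genuinely nontrivial here (one must simultaneously preserve properness and the dissipation inequality), and in fact the original proof bypasses this by taking a different route. Second, in $(ii)\Rightarrow(i)$ your weight $g$ must do more than ``grow fast enough'': the converse $0$-GAS Lyapunov function $W$ comes with no a priori control on $\frac{\partial W}{\partial x}\bigl(f(x,d)-f(x,0)\bigr)$ for large $\Vert d\Vert$, so the construction actually requires $g'$ to be \emph{bounded} (so that this cross term is dominated by $\chi(\Vert d\Vert)$ from the dissipativity certificate plus a $\mathcal{K}$-function of $\Vert d\Vert$), while $g$ still needs to be $\mathcal{K}_\infty$; a function like $g(r)=\ln(1+r)$ (exactly the trick the present paper uses for $\mathcal{W}=\ln(1+E)$) is the right shape. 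With these refinements your sketch matches the literature proof.
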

Finally, the following property forms a bridge between ISS and iISS
characterizations:
\begin{defin}
The system (\ref{eq:nonl_syst}) is called \emph{strongly iISS}, if
it is iISS and there is $D>0$ such that it is ISS for the inputs
satisfying $\|d\|_{\infty} \leq D$.
\end{defin}

\section{Main results}
\label{sec:33}

Suggesting a max value regularization of \eqref{eq:CLoriginal} yields
\begin{eqnarray}
\dot{x}_{1} & = & x_{2},\label{eq:33closedloop}\\
\dot{x}_{2} & = & -\max\bigl\{\mu,\bigl|x_{1}\bigr|\bigr\}^{-1}\bigl(\gamma x_{1}+\bigl|x_{2}\bigr|x_{2}\bigr)+d,\nonumber 
\end{eqnarray}
where $0<\mu\ll1$ is a parameter, $\gamma>0$ as before is chosen
to compensate the influence of external disturbances $d$, $d\in\mathcal{L}_{\infty}^{1}$
with $\|d\|_{\infty}\leq D$ for a given $D>0$. This system is locally Lipschitz continuous in
$\mathbb{R}^{2}$, hence, it can be easily discretized using a conventional explicit Euler method \cite{Butcher2008}.

To establish the basic stability property for (\ref{eq:33closedloop}),
we first introduce an energy-like Lyapunov function candidate 
\begin{gather*}
E(x)=\gamma z(x_{1})+\frac{1}{2}x_{2}^{2},\\
z(x_{1})=\int_{0}^{x_{1}}\frac{s}{\max\{\mu,|s|\}}ds=\begin{cases}
\frac{x_{1}^{2}}{2\mu} & |x_{1}|<\mu\\
|x_{1}|-\frac{\mu}{2} & |x_{1}|\geq\mu
\end{cases},
\end{gather*}
which is positive definite and radially unbounded, and its time derivative
yields to 
\[
\dot{E}=-\frac{|x_{2}|x_{2}^{2}}{\max\{\mu,\bigl|x_{1}\bigr|\}}+x_{2}d,
\]
which is negative semi-definite if $d\equiv0$. It can be easily shown,
by the LaSalle's invariance principle, that any trajectory of  $x=[x_{1},x_{2}]^{\top}\in\mathbb{R}^{2}$ will not stay in the set 
\[
S=\bigl\{(x_{1},x_{2})\:\bigl|\:x_{2}=0\bigr\},
\]
except the trivial trajectory $x=0$. Therefore, the system (\ref{eq:33closedloop})
is $0$-GAS. Moreover, introducing an extension of the energy-like
function
\[
\mathcal{W}(x)=\ln(1+E(x)),
\]
which is also positive definite and radially unbounded, obtain
\begin{align*}
\dot{\mathcal{W}} & =-\frac{|x_{2}|x_{2}^{2}}{\max\{\mu,\bigl|x_{1}\bigr|\}\bigl(1+E(x)\bigr)}+\frac{x_{2}}{1+E(x)}d  \; \leq \; \frac{1}{\sqrt{2}}|d|,
\end{align*}
thus establishing the zero-output smooth dissipativity of (\ref{eq:33closedloop}).
Therefore, by Theorem \ref{thm:iISS_Lyap}, this system is iISS. Which
is a reasonable conclusion taking into account that the system has
a bounded dissipation in the variable $x_{1}$.

Next, in this section, we will try to demonstrate additional performance
characteristics of this regularized system.

Note that for $|x_{1}|\geq\mu$ we get the original closed-loop dynamics
\eqref{eq:CLoriginal}, hence, we can use the related Lyapunov function
from \cite{ruderman2024robust}, while for $|x_{1}|<\mu$ a simplified
model is obtained. Below considering this system as a switched one,
we will make analysis of both sub-dynamics, first separately, and
next, by uniting these results we provide the conclusion on additional
global properties of (\ref{eq:33closedloop}).

\subsection{Outside $\mu$-region}

\label{sec:33:sub:1}

If $|x_{1}|\geq\mu$, then (\ref{eq:33closedloop}) recovers to the
original closed-loop dynamics \eqref{eq:CLoriginal} from \cite{ruderman2024robust}.
Therefore, the stability and convergence analysis are the same as
provided in \cite{ruderman2024robust}. In particular, the following
Lyapunov function can be applied:
\begin{equation}
V(x)=\gamma|x_{1}|+\frac{1}{2}x_{2}^{2}+\varepsilon\sqrt{|x_{1}|}\text{sign}(x_{1})x_{2},\label{eq:Lyapunov0}
\end{equation}
which is positive definite and radially unbounded for any $\varepsilon\in(0,\sqrt{2\gamma})$
(it can be presented as a quadratic form of $\sqrt{|x_{1}|}\text{sign}(x_{1})$
and $x_{2}$), and it was show in \cite{ruderman2024robust} that
the full time derivative of this Lyapunov function with respect to
the dynamics in \eqref{eq:CLoriginal} admits an estimate:
\begin{gather*}
\begin{align}
\dot{V}\leq & -\left(\varepsilon\left(\gamma-\frac{1}{2}-|d|\right)-\frac{2}{3}|d|^{1.5}\right)\sqrt{|x_{1}|}\\
& -\left(\frac{2}{3}-\varepsilon\right)\frac{|x_{2}|}{|x_{1}|}x_{2}^{2},
\end{align}
\end{gather*}
which for $\gamma>\frac{1}{2}+D+\frac{2}{3\varepsilon}D^{1.5}$ and
$\|d\|_{\infty}\leq D$ guarantees the uniform stability and convergence
in \eqref{eq:CLoriginal} to zero for all $x\in\mathbb{R}^n$.

\subsection{In vicinity to origin}

\label{sec:33:sub:2}

If $|x_{1}|<\mu$, then the corresponding system 
\begin{eqnarray*}
\dot{x}_{1} & = & x_{2},\\
\dot{x}_{2} & = & -\frac{\gamma}{\mu}x_{1}(t)-\frac{1}{\mu}\bigl|x_{2}(t)\bigr|x_{2}(t)+d(t),
\end{eqnarray*}
from (\ref{eq:33closedloop}) can be rewritten in the state-space
form 
\begin{equation}
\dot{x}(t)=\left[\begin{array}{cc}
0 & 1\\
-\dfrac{\gamma}{\mu} & -\dfrac{|x_{2}|}{\mu}
\end{array}\right]\,x(t)+\left[\begin{array}{c}
0\\[1mm]
1
\end{array}\right]\ d(t)\label{eq:33closedloopSS}
\end{equation}
used for the further studies.

For the analysis of (\ref{eq:33closedloopSS}), we first introduce
an energy-like Lyapunov function candidate 
\begin{equation}
\mathcal{E}(x)=\frac{1}{2}\frac{\gamma}{\mu}x_{1}^{2}+\frac{1}{2}x_{2}^{2},\label{eq:Lyapunov1}
\end{equation}
which is positive definite and radially unbounded, and its time derivative
yields to 
\begin{equation}
\dot{\mathcal{E}}=-\frac{|x_{2}|x_{2}^{2}}{\mu}+x_{2}d,\label{eq:dLyapunov1}
\end{equation}
which is negative semi-definite if $d\equiv0$.

\subsubsection{Local ISS property}

In order to obtain the results for $d\ne0$, consider the following
Lyapunov function:
\begin{equation}
\begin{aligned}
W(x)= & \: \mathcal{E}(x)+H(x)+\frac{\mu}{3}\left(\frac{\gamma}{\mu}+1\right)\epsilon_{1}^{4}\mathcal{E}^{1.5}(x) \\
& +\frac{2\sqrt{2}}{7}\epsilon_{2}^{4}\mathcal{E}^{3.5}(x),\label{eq:Lyapunov2}
\end{aligned}
\end{equation}
where $\epsilon_{1},\epsilon_{2}$ are positive parameters whose values
will be specified below, $\mathcal{E}(x)$ is given in (\ref{eq:Lyapunov1}),
and 
\[
H(x)=\frac{1}{4}h^{4}(x), \quad h(x)=\frac{\gamma}{\mu}x_{1}+x_{2}.
\]
Obviously, $W(x)$ is positive definite and radially unbounded since
all terms and parameters are nonnegative, while $\mathcal{E}(x)$
has this property. Note that
\[
\dot{h}=\left(\frac{\gamma}{\mu}+1\right)x_{2}-h-\frac{1}{\mu}|x_{2}|x_{2}+d
\]
by direct computations, and using Young's inequality we can obtain
the inequalities

\[
x_{2}h^{3}\leq\frac{3}{4\epsilon_{1}^{\frac{4}{3}}}h^{4}+\frac{\epsilon_{1}^{4}}{4}x_{2}^{4},\;|x_{2}|x_{2}h^{3}\leq\frac{3}{4\epsilon_{2}^{\frac{4}{3}}}h^{4}+\frac{\epsilon_{2}^{4}}{4}x_{2}^{8}
\]
for any $\epsilon_{1},\epsilon_{2}>0$, leading to
\begin{gather*}
\begin{aligned}
\dot{H} = & \left(\frac{\gamma}{\mu}+1\right)x_{2}h^{3}-h^{4}-\frac{h^{3}}{\mu}|x_{2}|x_{2}+h^{3}d \\
& \leq-  \kappa h^{4}+\left(\frac{\gamma}{\mu}+1\right)\frac{\epsilon_{1}^{4}}{4}x_{2}^{4}+\frac{\epsilon_{2}^{4}}{4\mu}x_{2}^{8}+h^{3}d
\end{aligned}
\end{gather*}
where
\[
\kappa=1-\left(\frac{\gamma}{\mu}+1\right)\frac{3}{4\epsilon_{1}^{\frac{4}{3}}}-\frac{1}{\mu}\frac{3}{4\epsilon_{2}^{\frac{4}{3}}}.
\]
The values of $\epsilon_{1},\epsilon_{2}$ can be chosen big enough
providing that $\kappa>0$. Therefore,

\[
\begin{aligned}\dot{W}\leq & -\frac{1}{\mu}|x_{2}|x_{2}^{2}-\kappa h^{4}\\
 & +\underset{b(x)}{\underbrace{\Biggl[h^{3}+\biggl(1+\frac{\mu}{2}\Bigl[\frac{\gamma}{\mu}+1\Bigr]\epsilon_{1}^{4}\sqrt{\mathcal{E}}+\sqrt{2}\,\epsilon_{2}^{4}\,\mathcal{E}^{2.5}\biggr)x_{2}\Biggr]}}d
\end{aligned}
\]
and the function $b(x)$ cannot be compensated by the negative terms,
but it is bounded by a constant at any bounded vicinity of the origin.
Consequently, according to the result of Theorem \ref{thm:ISS_Lyap},
$W$ is a local ISS-Lyapunov function for (\ref{eq:33closedloopSS}).

\subsubsection{Analysis of linearization}

The system (\ref{eq:33closedloopSS}), when linearized in the equilibrium
$x=0$, has the system matrix 
\begin{equation}
A=\left[\begin{array}{cc}
0 & 1\\[1mm]
-\dfrac{\gamma}{\mu} & 0
\end{array}\right]\label{eq:SysMatrix}
\end{equation}
and constitutes a harmonic oscillator with the angular frequency $\omega_{0}=\sqrt{\frac{\gamma}{\mu}}$.
In order to evaluate the bounds of the control error $x_{1}(t)$ with
$t\rightarrow\infty$ depending on the bounds of the disturbance $\|d\|_{\infty}\leq D$,
consider two scenarios. 

First, consider the case of a constant disturbance
(as it is typical and relevant in several applications), i.e., $d=\bar{d}\in(0,D)$.
The corresponding solution of the forced oscillations with $x(0)=0$
is 
\begin{equation}\label{eq:forcedOscillations1}
x_{1}(t) = \frac{\mu}{\gamma}\bar{d}-\frac{\mu}{\gamma}\bar{d}\cos(\omega_{0}t), \quad x_{2}(t) = \frac{\omega_0 \mu }{\gamma}\bar{d}\sin(\omega_{0}t).
\end{equation}
From \eqref{eq:forcedOscillations1}, it is evident
%\textcolor{red}{It is wrong that is written below:
%try to differentiate (\ref{eq:forcedOscillations1}) to get the expression
%of $x_{2}(t)$} 
that at any sufficiently small time instant
$t^{\ast}>0$, the forced solution $x_{1}(t^{\ast})$ and so its time
derivative $x_{2}(t^{\ast})$, cf. \eqref{eq:forcedOscillations1}, become non zero. That leads to a local linearized in $\bigl[x_1(t^{\ast}),x_2(t^{\ast})\bigr]$ from \eqref{eq:33closedloop} model
\begin{equation}
\ddot{x}_{1}(t)+\sigma^{\ast}\dot{x}_{1}(t)+\omega^{2}x_{1}(t)=\bar{d},\label{eq:locallinear1}
\end{equation}
with a positive damping factor $\sigma^{\ast}=|x_{2}(t^{\ast})|\mu^{-1} > 0$.
Applying the final value theorem to (\ref{eq:locallinear1}), either
in Laplace domain or time domain i.e. $\dot{x}=0$, results in (cf.
with (\ref{eq:forcedOscillations1})) 
\begin{equation}
x_{1}(t)\rightarrow\frac{\mu}{\gamma}\,\bar{d}\quad\hbox{ for }\quad t\rightarrow\infty.\label{eq:errorBoundConst}
\end{equation}

Next, it can be shown that for both the harmonic oscillator (\ref{eq:33closedloopSS})
with (\ref{eq:SysMatrix}) and its damped counterpart on the 
left-hand-side of (\ref{eq:locallinear1}), the largest excitation and so the residual control
error $\bigl|x_{1}(t)\bigr|_{t\rightarrow\infty}$ appear when $d(t)$
meets the natural frequency $\omega_{0}$. Thus, consider (again) the local
linear model 
\begin{equation}
\ddot{x}_{1}(t)+\sigma^{\ast}\dot{x}_{1}(t)+\omega_{0}^{2}x_{1}(t)=\tilde{d}\cos(\omega_{0}t)\label{eq:forcedOscillator}
\end{equation}
with the disturbance amplitude $\tilde{d}<D$. Neglecting the homogeneous
part of the solution of (\ref{eq:forcedOscillator}), which is always
converging to zero $\forall\,\sigma^{\ast}>0$, in other words assuming without
loss of generality $x_{1}(0)=0,\,\dot{x}_{1}(0)=0$, the non-homogeneous
solution of (\ref{eq:forcedOscillator}) is 
\begin{equation}
x_{1}=\frac{\tilde{d}}{\sigma^{\ast}\omega_{0}}\bigg(\sin(\omega_{0}t)-\dfrac{e^{-0.5\sigma^{\ast} t}\,\sin\bigl(\sqrt{\omega_{0}^{2}-(0.5\sigma^{\ast})^{2}}\,t\bigr)}{\sqrt{\omega_{0}^{2}-(0.5\sigma^{\ast})^{2}}}\biggr).\label{eq:forcedSolution}
\end{equation}
One can recognize that the second term in brackets of (\ref{eq:forcedSolution})
is vanishing as the time increases, so that the steady-state value
results in 
\begin{equation}
\bar{x}_{1}=x_{1}(t)\bigr|_{t\rightarrow\infty}=\frac{\tilde{d}}{\sigma^{\ast}\omega_{0}}\sin(\omega_{0}t).\label{eq:forcedSS}
\end{equation}
The corresponding time derivative is 
\begin{equation}
\bar{x}_{2}=\dot{\bar{x}}_{1}=\frac{\tilde{d}}{\sigma^{\ast}}\cos(\omega_{0}t).\label{eq:forcedSSx2}
\end{equation}
From (\ref{eq:forcedSSx2}) and $\sigma^{\ast}=\bigl|\max(\bar{x}_{2})\bigr|\mu^{-1}$ one obtains 
\[
\max(\bar{x}_{2})=\frac{\tilde{d}\,\mu}{\max(\bar{x}_{2})},
\]
that leads to 
\begin{equation}
\max\bigl(\bar{x}_{2}\bigr)=\sqrt{\tilde{d}\,\mu}.\label{eq:Maxx2}
\end{equation}
Since for a forced harmonic oscillator \eqref{eq:forcedOscillator} in steady-state, the maximal
value of a periodic $x_2(t)$ leads to the correspondingly maximal $|x_{1}|$,
one can obtain from (\ref{eq:forcedSS}) and (\ref{eq:Maxx2}) 
\begin{equation}
\max|x_{1}|=\frac{\tilde{d}\,\mu}{\sqrt{\gamma\tilde{d}}}.\label{eq:Maxx1}
\end{equation}
The estimate (\ref{eq:Maxx1}) constitutes the upper bound of the
control error $x_{1}(t)$ for a bounded disturbance $|d(t)|\leq\tilde{d}<D$.

The following numerical results confirms the estimated upper bounds
\eqref{eq:errorBoundConst} and \eqref{eq:Maxx1}. The closed-loop
control system \eqref{eq:33closedloop} is simulated with use of the
first-order Euler solver and fixed step sampling $0.00001$ sec, while
the control gain is assigned to $\gamma=100$ and the initial conditions
to $x(0)=[1,\,0]^{\top}$. 
\begin{figure}[!h]
\centering 
\includegraphics[width=0.98\columnwidth]{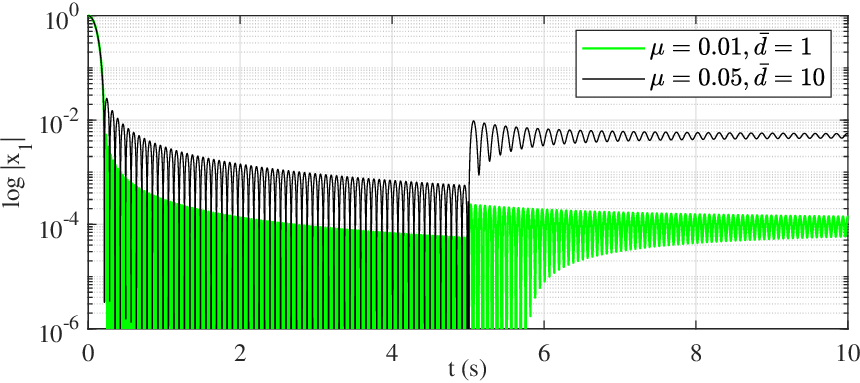}
\includegraphics[width=0.98\columnwidth]{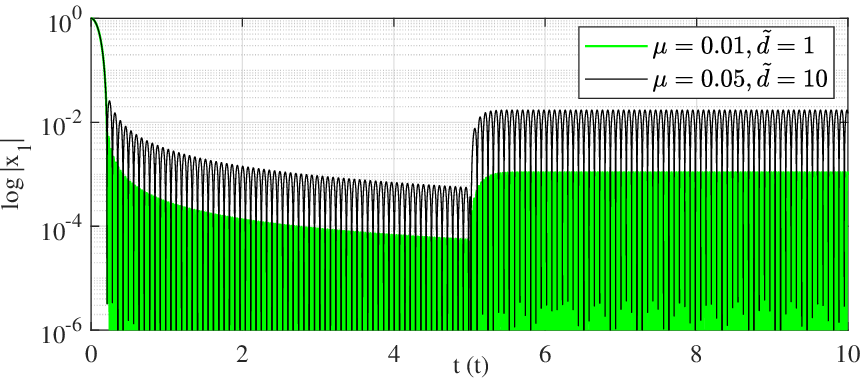}
\caption{Output convergence (on logarithmic scale) of the system \eqref{eq:33closedloop}: above -- for two pairs $\mu=\{0.01,\,0.05\}$ and $\bar{d}=\{1,\,10\}$ with constant disturbance applied at $t=5$ sec; below -- for two pairs $\mu=\{0.01,\,0.05\}$
and $\tilde{d}=\{1,\,10\}$ with harmonic disturbance applied at $t=5$ sec.}
\label{fig:simConstDist} 
\end{figure}
%\begin{figure}[!h]
%\centering 
%\includegraphics[width=0.98\columnwidth,bb = 0 0 200 100, draft, %type=eps]{../../../AppData/Local/Temp/lyx_tmpdir.FfmixbmsxpoZ/simHarmDist}
%\includegraphics[width=0.98\columnwidth]{simHarmDist}
%\caption{Output convergence (logarithmic) for two pairs $\mu=\{0.01,\,0.05\}$
%and $\tilde{d}=\{1,\,10\}$ with harmonic disturbance applied at $t=5$
%sec.}
%\label{fig:simHarmDist} 
%\end{figure}
Two type of disturbances, the constant one $d(t)=\bar{d}$ and the (resonant) harmonic one $d(t)=\tilde{d}\cos\bigl(\sqrt{\gamma/\mu}\cdot t\bigr)$ are assumed and applied at time $t=5$ sec to the system \eqref{eq:33closedloop}. Two pairs of the parameter values are shown for comparison in both cases, $\mu=\{0.01,\,0.05\}$ and
$\bar{d}=\tilde{d}=\{1,\,10\}$. The convergence of the output state
absolute value are shown logarithmically in Fig. \ref{fig:simConstDist}, for the constant disturbance
above and for the (resonant) harmonic disturbance below. Both upped bounds, correspondingly
final values, coincide exactly with those computed by \eqref{eq:errorBoundConst}
and \eqref{eq:Maxx1}.

\subsection{Local ISS for (\ref{eq:33closedloop})}

As it has been established previously, the system (\ref{eq:33closedloop})
is iISS, which implies global boundedness and convergence of trajectories
in the presence of disturbances $d$ with a properly bounded integral.
However, if $d\in\mathcal{L}_{\infty}^{1}$, then the trajectories
may be unbounded. Also, it has been shown above that in a neighborhood
of the origin, with $|x_{1}|<\mu$, the system admits a local ISS-Lyapunov
function $W$ given in (\ref{eq:Lyapunov2}), and for $|x_{1}|\geq\mu$
the system has a strictly decaying Lyapunov function $V$ presented
in (\ref{eq:Lyapunov0}). Then, we arrive at our main result by utilizing
the combination of $W$ and $V$:
\begin{theorem}
For any choice of $\gamma>\max\{4,2D+4\sqrt{2}D^{1.5}\}$ and $\mu>0$,
with a sufficiently small $D$ (for the chosen value of $\mu$), the system (\ref{eq:33closedloop})
with the disturbances satisfying $\|d\|_{\infty}\leq D$ is strongly
iISS.
\end{theorem}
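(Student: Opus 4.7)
My plan is to decompose the strong iISS property into its two ingredients. Since iISS on $\mathbb{R}^{2}$ has already been established above via the logarithmic Lyapunov function $\mathcal{W}(x)=\ln(1+E(x))$ combined with $0$-GAS and Theorem~\ref{thm:iISS_Lyap}, only the ISS property for inputs satisfying $\|d\|_{\infty}\leq D$ remains to be proven. To obtain it, I would splice together the strict Lyapunov function $V$ from~\eqref{eq:Lyapunov0}, which governs the outer region $\{|x_{1}|\geq\mu\}$, and the local ISS-Lyapunov function $W$ from~\eqref{eq:Lyapunov2}, which governs a bounded sublevel set inside $\{|x_{1}|<\mu\}$, matching them across the boundary $|x_{1}|=\mu$.

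\textbf{Outer region.} First I would fix $\varepsilon\in(0,\min\{2/3,\sqrt{2\gamma}\})$ in $V$ so that the hypothesis $\gamma>\max\{4,\,2D+4\sqrt{2}\,D^{1.5}\}$ implies the condition $\gamma>\tfrac{1}{2}+D+\tfrac{2}{3\varepsilon}D^{1.5}$ demanded by the $\dot V$ estimate of Section~\ref{sec:33:sub:1}. The choice $\varepsilon=\tfrac{1}{2}$ works: for $D\leq\tfrac{1}{2}$ the bound $\gamma>4$ dominates (since $\tfrac{1}{2}+D+\tfrac{4}{3}D^{1.5}<4$), while for $D>\tfrac{1}{2}$ the bound $\gamma>2D+4\sqrt{2}D^{1.5}$ dominates term-by-term. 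With this $\varepsilon$ the Lyapunov function $V$ is strictly decaying on $\{|x_{1}|\geq\mu\}$, uniformly in $d$ with $\|d\|_{\infty}\leq D$; in particular every sublevel set $\{V\leq c\}$ is forward invariant as long as the trajectory remains outside the $\mu$-strip.

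\textbf{Inner region and patching.} In Section~\ref{sec:33:sub:2} we already derived
\[
\dot W\leq -\tfrac{1}{\mu}|x_{2}|x_{2}^{2}-\kappa h^{4}+b(x)\,d.
\]
The two negative terms form a positive definite function of $x$ (they vanish jointly only at $x=0$), so after applying Young's inequality to $b(x)d$ on any bounded sublevel set $\{W\leq r\}\subset\{|x_{1}|<\mu\}$, a clean estimate $\dot W\leq -\alpha(\|x\|)+\eta(|d|)$ with $\alpha\in\mathcal{K}_\infty$ and $\eta\in\mathcal{K}$ emerges, making $W$ a local ISS-Lyapunov function there by Theorem~\ref{thm:ISS_Lyap}. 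Forward completeness is inherited from iISS; on the outer region $V$ strictly decreases, so every trajectory either starts in, or enters in finite time, the strip $\{|x_{1}|<\mu\}$, after which the local ISS estimate confines it inside a sublevel set of $W$ of radius of order $\alpha^{-1}(\eta(D))$. Choosing $D$ small enough relative to $\mu$ makes this attractor lie strictly inside $\{|x_{1}|<\mu\}$, so the trajectory cannot re-exit asymptotically. Assembling the outer-region $\mathcal{KL}$-decay of $V$ with the local ISS bound from $W$ delivers the estimate $\|x(t)\|\leq\beta(\|x_{0}\|,t)+\gamma_{\mathrm{ISS}}(\|d\|_{[0,t)})$ valid on $\|d\|_{\infty}\leq D$, which combined with iISS yields strong iISS.

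\textbf{Main obstacle.} The principal difficulty lies in the handoff step: one must verify that the value $W$ takes when a trajectory first enters $\{|x_{1}|<\mu\}$ does fall inside the sublevel set on which the Young-based absorption used to derive the local ISS inequality is valid, and simultaneously that the residual attractor $\alpha^{-1}(\eta(D))$ sits strictly below $\mu$. These two quantitative conditions tie $D$ to $\mu$ and are exactly what produces the ``sufficiently small $D$ for the chosen $\mu$'' qualifier appearing in the statement; unfolding the Young constants and the admissible sublevel radius explicitly in terms of $\gamma$, $\mu$, and $D$ constitutes the bulk of the remaining technical work.
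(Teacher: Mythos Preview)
Your overall strategy is sound and the iISS half is indeed already done, but your route to ISS diverges substantially from the paper's and runs into a difficulty that is more serious than your ``main obstacle'' paragraph suggests.

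The paper does \emph{not} patch $V$ and $W$ across the boundary $|x_{1}|=\mu$ by a trajectory argument. Instead it first modifies $V$ into
\[
\tilde V(x)=\gamma z(x_{1})+\tfrac{1}{2}x_{2}^{2}+\varepsilon\sqrt{z(x_{1})}\,\mathrm{sign}(x_{1})x_{2},
\]
replacing $|x_{1}|$ by $z(x_{1})$, so that $\tilde V$ is adapted to the regularized dynamics \eqref{eq:33closedloop} on the \emph{whole} plane. The derivative $\dot{\tilde V}$ is then estimated globally: it is strictly negative for $|x_{1}|\geq\mu$ under $\gamma>\max\{4,\,2D+4\sqrt{2}D^{1.5}\}$ (this is where the constants in the statement come from, with $\varepsilon=\tfrac{1}{3}$), while for $|x_{1}|<\mu$ it carries a bounded bias of order $\sqrt{\mu}$. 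A single ISS-Lyapunov function $U=\tilde V^{3}+\sigma(W)$ is then built, with $\sigma\in\mathcal{K}$ bounded and saturating so that $\sigma(W)$ is constant for $|x_{1}|\geq\mu$; cubing $\tilde V$ makes the inner-region bias acquire factors $x_{1}^{4},x_{2}^{4}$, which are absorbed by the $-\kappa h^{4}$ and $-\mu^{-1}|x_{2}|^{3}$ terms in $\dot W$ for $D$ small. No trajectory bookkeeping is needed.

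Your patching argument, by contrast, has to control \emph{repeated} crossings of $|x_{1}|=\mu$: a trajectory can enter the strip with $|x_{2}|$ of order $V(x_{0})^{1/2}$, so the sublevel set of $W$ it lands in need not be contained in $\{|x_{1}|<\mu\}$, and the local ISS estimate for $W$ does not yet apply; the trajectory may exit, re-enter, and so on. During each passage through the strip, $\dot V$ for the regularized dynamics is \emph{not} known to be negative (the estimate from Section~\ref{sec:33:sub:1} is for \eqref{eq:CLoriginal}, valid only on $|x_{1}|\geq\mu$), so you have no direct control on how $V$ changes there. Turning this into a clean $\mathcal{KL}$ bound requires an additional argument (e.g., via $E$ or $\tilde V$) that you have not supplied, and once you introduce $\tilde V$ for that purpose you are essentially on the paper's path anyway.
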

\begin{proof}
Our strategy to design the required ISS-Lyapunov function consists,
first, in slight modification of one presented in (\ref{eq:Lyapunov0})
by replacing $|x_{1}|$ (integral of nonlinearity in \eqref{eq:CLoriginal})
by $z(x_{1})$ (integral of the counterpart in (\ref{eq:33closedloop})):
\[
\tilde{V}(x)=\gamma z(x_{1})+\frac{1}{2}x_{2}^{2}+\varepsilon\sqrt{z(x_{1})}\text{sign}(x_{1})x_{2},
\]
which is again positive definite and radially unbounded for $\varepsilon\in(0,\sqrt{2\gamma})$.
The time derivative of $\tilde{V}$ on the trajectories of (\ref{eq:33closedloop})
can be written as follows:
\begin{gather*}
\dot{\tilde{V}}=-\frac{|x_{2}|x_{2}^{2}}{\max\{\mu,|x_{1}|\}}-\gamma\varepsilon\frac{\sqrt{z(x_{1})}|x_{1}|}{\max\{\mu,|x_{1}|\}}\\
+\varepsilon\left(\frac{1}{2}-\frac{z(x_{1})}{|x_{1}|}\text{sign}(x_{1})\text{sign}(x_{2})\right)\frac{|x_{1}|}{\sqrt{z(x_{1})}}\frac{x_{2}^{2}}{\max\{\mu,|x_{1}|\}}\\
+(\varepsilon\sqrt{z(x_{1})}\text{sign}(x_{1})+x_{2})d\\
\leq-\frac{|x_{2}|x_{2}^{2}}{\max\{\mu,|x_{1}|\}}-\gamma\varepsilon\frac{\sqrt{z(x_{1})}|x_{1}|}{\max\{\mu,|x_{1}|\}}\\
+\varepsilon\left(\frac{1}{2}+\frac{z(x_{1})}{|x_{1}|}\right)\frac{\frac{|x_{1}|}{\sqrt{z(x_{1})}}x_{2}^{2}}{\max\{\mu,|x_{1}|\}}+(\varepsilon\sqrt{z(x_{1})}+|x_{2}|)|d|.
\end{gather*}
Note that $\frac{z(x_{1})}{|x_{1}|}\leq1$, and
\[
\frac{|x_{1}|}{\sqrt{z(x_{1})}}x_{2}^{2}\leq\frac{2}{3}|x_{2}|^{3}+\frac{1}{3}\left(\frac{|x_{1}|}{\sqrt{z(x_{1})}}\right)^{3}
\]
following the Young's inequality, then we obtain:
\begin{gather*}
\dot{\tilde{V}}\leq(\varepsilon\sqrt{z(x_{1})}+|x_{2}|)|d|-(1-\varepsilon)\frac{|x_{2}|x_{2}^{2}}{\max\{\mu,|x_{1}|\}}\\
-\varepsilon\frac{\left(\gamma\sqrt{z(x_{1})}|x_{1}|-\frac{1}{2}\left(\frac{|x_{1}|}{\sqrt{z(x_{1})}}\right)^{3}\right)}{\max\{\mu,|x_{1}|\}}.
\end{gather*}
Using again the Young's inequality and recalling $|d|\leq D$:
\begin{align*}
|x_{2}||d| & =\frac{|x_{2}|}{\sqrt[3]{\max\{\mu,|x_{1}|\}}}\sqrt[3]{\max\{\mu,|x_{1}|\}}|d|\\
 & \leq\frac{1}{3}\frac{|x_{2}|^{3}}{\max\{\mu,|x_{1}|\}}+\frac{2}{3}\sqrt{\max\{\mu,|x_{1}|\}}D^{1.5},
\end{align*}
the upper estimate for the derivative of the Lyapunov function can
be further simplified:
\begin{gather*}
\dot{\tilde{V}}\leq-\varepsilon k(x_{1})-\left(\frac{2}{3}-\varepsilon\right)\frac{|x_{2}|x_{2}^{2}}{\max\{\mu,|x_{1}|\}} -\varepsilon e(x_{1},D),
\end{gather*}
where
\begin{gather*}
k(x_{1})=\frac{\gamma\sqrt{z(x_{1})}|x_{1}|-\left(\frac{|x_{1}|}{\sqrt{z(x_{1})}}\right)^{3}}{2\max\{\mu,|x_{1}|\}}\\
=\begin{cases}
\gamma\frac{1}{\left(\sqrt{2\mu}\right)^{3}}x_{1}^{2}-\sqrt{2\mu} & |x_{1}|<\mu\\
\frac{\gamma\sqrt{|x_{1}|-\frac{\mu}{2}}|x_{1}|-\left(\frac{|x_{1}|}{\sqrt{|x_{1}|-\frac{\mu}{2}}}\right)^{3}}{2|x_{1}|} & |x_{1}|\geq\mu
\end{cases},\\
e(s,D)=\frac{\sqrt{z(s)}}{\max\{\mu,|s|\}}\left(\frac{\gamma}{2}|s|-D-\frac{\max\{\mu,|s|\}^{\frac{3}{2}}}{\frac{3}{2}\varepsilon\sqrt{z(s)}}D^{\frac{3}{2}}\right).
\end{gather*}
First, analyzing the expression of $k(x_{1})$ we conclude that if
$\gamma>4$, then $k(x_{1})>0$ for $|x_{1}|\geq\mu$ and, while for
$|x_{1}|<\mu$ the constant negative term is obviously dominating
for $|x_{1}|\leq\frac{2\mu}{\sqrt{\gamma}}$, and increasing the gain
$\gamma$ this domain can be narrowed. Performing similar analysis
we get for $|x_{1}|\geq\mu$:
\begin{align*}
e(x_{1},D) & =\sqrt{|x_{1}|-\frac{\mu}{2}}\left(\frac{\gamma}{2}-D-\frac{2\sqrt{|x_{1}|}}{3\varepsilon\sqrt{|x_{1}|-\frac{\mu}{2}}}D^{1.5}\right)\\
 & \geq\sqrt{|x_{1}|-\frac{\mu}{2}}\left(\frac{\gamma}{2}-D-\frac{2\sqrt{2}}{3\varepsilon}D^{1.5}\right)
\end{align*}
that is strictly positive provided that 
\[
\frac{\gamma}{2}>D+\frac{2\sqrt{2}}{3\varepsilon}D^{1.5},
\]
and for $|x_{1}|<\mu$:
\begin{align*}
e(x_{1},D) & =\frac{\frac{\gamma}{2}|x_{1}|}{\mu}\sqrt{\frac{x_{1}^{2}}{2\mu}}-\sqrt{\frac{x_{1}^{2}}{2\mu}}D-\frac{2\sqrt{\mu}}{3\varepsilon}D^{1.5}\\
 & \geq-\sqrt{\frac{\mu}{2}}D-\frac{2\sqrt{\mu}}{3\varepsilon}D^{1.5}
\end{align*}
having the disturbance gain of order $\sqrt{\mu}$. Note that for
$\gamma>4$, $\varepsilon=\frac{1}{3}$ is an admissible choice, giving
the restriction stated in the formulation of the theorem, then $\dot{\tilde{V}}$
is strictly negative for $|x_{1}|\geq\mu$, and locally close to the
origin, with $|x_{1}|<\mu$, a bias and influence of the disturbances
appear. Therefore, $\tilde{V}$ is a practical ISS-Lyapunov function
for (\ref{eq:33closedloop}) with $\|d\|_{\infty}\leq D$. In order
to construct an ISS-Lyapunov function, consider the following candidate:
\[
U(x)=\tilde{V}^{3}(x)+\sigma(W(x)),
\]
where $\sigma\in\mathcal{K}$ is a bounded continuously differentiable
function which is reduced to a linear map close to the origin (it is designed in a way 
to guarantee that $\sigma(W(x)))=\text{const}$ for $|x_1|\geq\mu$). The
idea behind this design is as follows. First, clearly, $U$ is positive
definite and radially unbounded due to $\tilde{V}$ possesses these
properties. Second, the derivative of the term $\tilde{V}^{3}(x)$
has the form $3\tilde{V}^{2}\dot{\tilde{V}}$, and it is strictly
negative for $|x_{1}|\geq\mu$, and where $\dot{U}=3\tilde{V}^{2}\dot{\tilde{V}}$
due to a choice of $\sigma$, while for $|x_{1}|<\mu$ the bias is
now multiplied by the term of orders $x_{1}^{4}$ and $x_{2}^{4}$
which can be compensated by the negative terms of order $x_{1}^{4}$
and $|x_{2}|x_{2}^{2}$ contained in $\dot{W}$, provided that the
disturbances are sufficiently small. Hence, by a proper weighting
of $\tilde{V}$ and $W$, we can guarantee that $U$ is an ISS-Lyapunov
function for (\ref{eq:33closedloop}) with $\|d\|_{\infty}\leq D$.
\end{proof}
Remarkable is that the requirements imposed here on
the gain $\gamma$ are more restrictive than for \eqref{eq:CLoriginal}
given in \cite{ruderman2024robust}:
\[
\gamma>\frac{1}{2}+D+2D^{1.5},
\]
which is however of the same shape; and that is the price to pay for
the regularization in vicinity to the origin.

\subsection{Alternative regularization}
\label{sec:altreg}

For the non-overshooting quasi-continuous sliding mode controller
\cite{ruderman2024robust}, we also suggest the control regularization
scheme, similar to that used in \cite{ruderman2021convergent}, 
\begin{equation}
u=-\bigl(|x_{1}|+\mu \bigr)^{-1} \bigl(\gamma x_{1} + |x_{2}|x_{2} \bigr),\label{eq:regcontrol}
\end{equation}
where the regularization factor $0<\mu\ll1$ is the second design
parameter, in addition to the control gain $\gamma>0$. The resulting
closed-loop control system, with a bounded perturbation $\|d\|_{\infty}\leq D$
for $D>0$, yields
\begin{eqnarray}
\dot{x}_1 & = & x_2,\label{eq:closedloop}\\
\dot{x}_2 & = & -\bigl(|x_{1}|+\mu \bigr)^{-1}\bigl(\gamma x_{1}+|x_{2}|x_{2}\bigr)+d,\nonumber 
\end{eqnarray}
where a similar to \eqref{eq:33closedloop} notation can be applied.

An energy-like Lyapunov function candidate is
\begin{equation}
E(x)=\gamma\Bigl(|x_{1}|-\mu\ln\bigl(\mu+|x_{1}|\bigr)\Bigr)+\frac{1}{2}x_{2}^{2},\label{eq:lyap1}
\end{equation}
resulting in 
\begin{equation}
\dot{E}=-\frac{|x_{2}|x_{2}^{2}}{\mu+|x_{1}|}+x_{2}d.\label{eq:Dlyap1}
\end{equation}
Next, an analysis similar to the one provided in the previous section
can be repeated to this regularized system. This would, however, go beyond the scope of the present work and will be therefore omitted for the sake of brevity. 
%In order to demonstrate a similar convergence behavior and that in a close vicinity to the origin %(i.e. for $|x_1| < \mu$), we include also the closed-loop system \eqref{eq:closedloop} into the %following numerical comparison.

\section{Numerical comparison}
\label{sec:3} 

\begin{figure}[!h]
\centering 
\includegraphics[width=0.95\columnwidth]{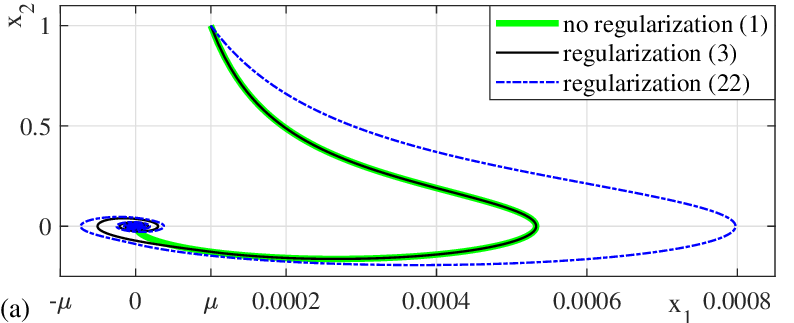}
\includegraphics[width=0.95\columnwidth]{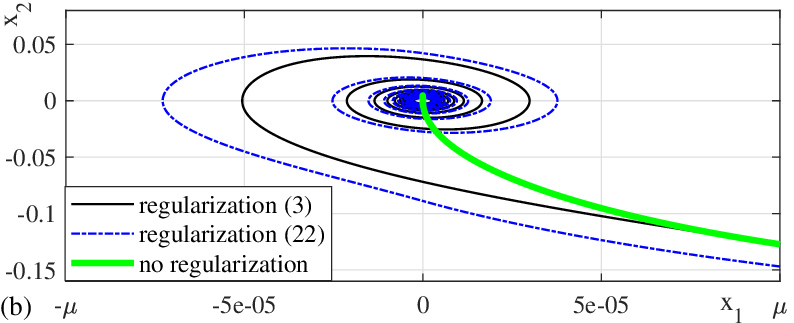}
\includegraphics[width=0.95\columnwidth]{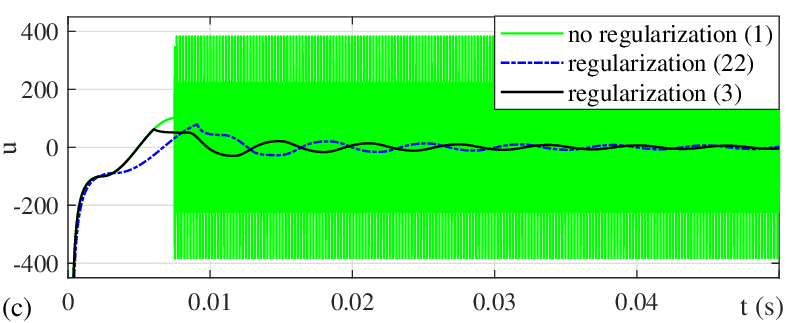}
\caption{Phase portrait of the regularized and not regularized closed-loop control systems: convergence of trajectories with $x(0)=[0.0001,1]^{\top}$ in (a), zoom in the $[-\mu, \mu]$ vicinity to origin in (b), and control signal in (c).}
\label{fig:sim2} 
\end{figure}
The numerical simulations with the first-order Euler solver and the fixed step sampling of $0.00001$ sec, the assigned control gain $\gamma=100$ and regularization factor $\mu = 0.0001$, and the initial conditions $x(0)=[0.0001,1]^{\top}$ are shown in Fig. \ref{fig:sim2}. Three unperturbed (i.e., $d=0$) closed-loop control systems, the original \eqref{eq:CLoriginal} and the regularized \eqref{eq:33closedloop} and \eqref{eq:closedloop}, are compared to each other in terms of the trajectories phase-portrait. The zoom in the $[-\mu, \mu]$ vicinity to the origin is shown in Fig. \ref{fig:sim2} (b) for the sake of a better visualization. The chattering suppression at the equilibrium, achieved by both regularization schemes, is shown in Fig. \ref{fig:sim2} (c), while $u(t)$ converges asymptotically to zero over the time. Note that in a perturbed system case (i.e. $d \neq 0$), the control $u(t)$ converges towards $d(t)$, cf. \cite[Figure~4]{ruderman2024robust}.

\section{Conclusions}
\label{sec:4}

Two regularization schemes for the control \cite{ruderman2024robust}, which is discontinuous in the stable origin only, were provided in the paper. The regularization preserves ISS and finite-time convergence properties outside a close vicinity (given by $x_1 \in [-\mu, \, \mu]$) of the origin, while providing iISS property for a sufficiently small $\mu \ll 1$ region and upper bounded unknown perturbations. A linearized equivalent system dynamics was used to estimate the residual control error in dependency of the disturbance upper bound and control design parameters. 
Future works might be concerned with convergence time estimation for the regularized control \cite{ruderman2024robust}.

%%%%%%%%%%%%%%%%%%%%%%%%%%%% REFERENCES %%%%%%%%%%%%%%%%%%%%%%%%%%%%%%%%%%%%%%%%
\bibliographystyle{IEEEtran}
\bibliography{references_}

\end{document}